\newtheorem{de}{Definition}
\newtheorem{pr}{Proposition}
\theoremstyle{definition}
\newcommand{\dif}{\mathrm{d}}
\newcommand{\ii}{\mathrm{i}}
\newcommand{\abs}[1]{\vert #1 \vert}  
\newcommand{\ov}{\overline}
\newcommand{\gr}{\mathrm{grad}}
\newcommand{\Ker}{\mathrm{Ker}\,}
\newcommand{\CC}{\mathbb{C}}
\newcommand{\RR}{\mathbb{R}}   
\newcommand{\ZZ}{\mathbb{Z}} 
\newcommand{\Ss}{\mathbb{S}}
\DeclareMathOperator{\di}{div}
\DeclareMathOperator{\cu}{curl}
\begin{document}

\title[Steady Euler flows on $\Ss^3$ $\&$ Faddeev-Skyrme solutions]{A steady Euler flow on the 3-sphere and its associated Faddeev-Skyrme solution}


\author{R. Slobodeanu}

\address{Faculty of Physics, University of Bucharest, P.O. Box Mg-11, Bucharest-M\u agurele, RO-077125, Romania}

\email{\texttt{radualexandru.slobodeanu@g.unibuc.ro}}

\date{\today}

\thanks{I thank D. Harland and D. Peralta-Salas for  several important hints.}

\subjclass[2010]{53C25, 58J50, 35Q31, 74G65.}

\keywords{Steady fluid, 3-sphere, linking, Hopf invariant, Faddeev-Skyrme model.}

\maketitle

\begin{center}
\textit{Dedicated with gratitude to Vasile Br\^inz\u anescu on his 75th birthday}
\end{center}

\medskip

\begin{abstract}
We present a steady Euler flow on the round 3-sphere $\Ss^3$ whose velocity vector field has the remarkable property of having two independent first integrals, being tangent to the fibres of an almost submersion onto the 2-sphere. This submersion turns out to be a critical point for the quartic Faddeev-Skyrme model with a standard potential.
\end{abstract}

\section{Introduction}
The now obsolete idea of vortex atoms (stable knotted thin vortex tubes in ether) proposed by Lord Kelvin in 1867 became later the main motivation for developping the knot theory and recently have registered a resurgence in mathematical physics \textit{via} some related ideas. Knotted and linked stream/vortex lines in fluid dynamics (or magnetic field lines in magnetohydrodynamics) and stable knot solitons in field theory (and also in condensed matter, chiral ferromagnetic liquid crystal colloids, and other areas \cite{sutclif}) are the subject of many impactful and mathematically deep recent studies. In the first case we work with (steady) vector field solutions in $\RR^3$ for Euler's fluid equations, while in the latter, with mapping solutions (from $\RR^3$ into the 2-sphere $\Ss^2$) for the Euler-Lagrange equations of Faddeev's reduction \cite{fad0} of the Skyrme quartic energy functional. It turns out that \cite{slo} each stationary solution $\varphi: M \to \Ss^2$ of the Faddeev-Skyrme model gives rise to a steady (forced) Euler fluid with vector field $V$ belonging to $\Ker(\dif \varphi)$, and conversely, each steady Euler vector field solution can be integrated \textit{locally} to a Faddeev-Skyrme solution (here $M$ may be a different smooth 3-manifold than $\RR^3$). To mark the special case where the converse holds globally we coined the name $S$-integrable for the respective steady Euler field. This correspondence, that has also a relativistic version \cite{slorel}, suggests that problems related to (the knotted solitons in) the Faddeev-Skyrme model are more constrained than their analogues for Euler flows. Indeed, while for steady Euler equations many important results are known (as the existence of Beltrami type solutions with periodic stream/vortex lines diffeomorphic to any given link \cite{EP1, enc}, or with thin vortex tubes of any link and knot type \cite{EP2}, and the existence of compactly supported solutions in $\RR^3$ \cite{gav}, to cite only few recent milestones), for Faddeev-Skyrme equations the existence results are sparse \cite{lin} and most investigations relies on numerics.

In this paper we supplement the picture in \cite{slo} with a new example of $S$-integrable steady Euler flow on the round 3-sphere $\Ss^3$ all of whose flow-lines are closed and linked twice, and that corresponds to a Faddeev-Skyrme solution (for the energy comprising only the quartic term and a standard potential) of Hopf invariant 2. In the next section we fix the set-up giving the main definitions and the precise formulation of the questions of interest. In section $\S 3$ we present our solution and its main properties, and in the Appendix we give some computational details and explain some conventions.

\section{$S$-integrable Euler fields and Faddeev-Skyrme solutions}

In this section we recall some definitions and facts about Euler's fluid equations and about Faddeev-Skyrme 
$\sigma$-model that will be useful in the sequel and we make precise the questions of interest. For more informations on topological fluid dynamics see the monography \cite{arn} and the introductory paper \cite{seltop}.

\begin{de}
A \emph{steady Euler field} on a Riemannian $3$-manifold $(M, g)$ is a tangent vector field $V$ on $M$ which is a solution of the \emph{stationary Euler equations}
\begin{equation} \label{eul}
\nabla_V V = - \gr \, p, \qquad \di V = 0
\end{equation}
for some pressure function $p$ on $M$.
\end{de}
When $M$ is (a domain of) the euclidean space $\RR^3$, the couple $(V, p)$ models an incompressible inviscid fluid (of constant density) in equilibrium. The associated flow will be called steady Euler (fluid) flow.
An equivalent reformulation of \eqref{eul} is
\begin{equation} \label{eulb}
V \times \cu V = \gr \, b, \qquad \di V = 0,
\end{equation}
with $b=p+\frac{1}{2}\abs{V}^2$ the Bernoulli function and $\cu V = (\ast \dif V^\flat)^\sharp$ the vorticity field, where $\ast$ is the Hodge star operator on $(M,g)$. Notice that $b$ is conserved along the flow: $V(b)=0$, i.e. when not constant, $b$ is a first integral of $V$.
 
From \eqref{eulb} we see that a divergence-free vector aligned with its own curl is a particular type of steady Euler field (with constant $b$). This solutions are usually called \textit{Beltrami fields} (aka \textit{force-free fields} in MHD). When the proportionality factor is constant we emphasize it by calling them \textit{strong} Beltrami fields (or simply curl-eigenvectors).

A very special class of solutions is obtained when the vector field $V \in \Gamma(TM)$ is \textit{completely integrable} (i.e. it has 2 independent first integrals; see e.g. \cite{miy, completei} for some results about this type of fields in the Euclidean space). In \cite{slo} we introduced the related notion of $S$-integrability as follows: a vector field $V$ on a $3$-manifold $M$ is \emph{$S$-integrable} if it exists a smooth (at least $C^2$) map $\varphi: M \to N$ to some surface $N$, such that $V_x \in \ker (\dif \varphi_x)$ at any regular point $x$ of $\varphi$ and $V \big \vert _{C_\varphi}=0$, where $C_\varphi = \{ x \in M : \mathrm{rank}(\dif \varphi_x) < 2\}$ is the critical set of $\varphi$. When $N$ is precisely the round 2-sphere we'll emphasize it by employing the terminology \emph{$\Ss$-integrable}.

The dynamical features of $S$-integrable steady Euler fields $V$ follow from the fact the their generic orbits (or stream lines) coincide to the regular fibres of the associated map $\varphi$ (here by regular fibre we mean the preimage of a regular point of $\varphi$). If $\varphi$ is smooth, by the regular level set theorem, any regular fibre is a 1-dimensional regular submanifold, so in particular a generic orbit of the vector field $V$ cannot be quasi-periodic (dense in a torus). Using the constant-rank level set theorem on a sufficiently small neighbourhood of a regular point, one can also exclude the possibility of having quasi-periodic orbits of $V$ inside the preimage of a critical value of $\varphi$.  Thus, in the case of $S$-integrable flows, we cannot encounter this type of dynamics with dense orbits (as it is the case for e.g. the flow in \cite{gav}). If moreover $M$ is closed, then all generic orbits of $V$ are periodic (closed curves) whose linking number can be related \cite{arn0, arn} to the Hopf homotopy invariant \footnote{This invariant can be defined if $H^2(M, \ZZ)=0$ or if we restrict to algebraically inessential maps.} $Q(\varphi) \in \ZZ$ of the associated map $\varphi: M \to \Ss^2$; in addition, by Arnold's structure theorem \cite{arn}, the orbits of $V$ lie on invariant 2-tori (which are the regular levels sets of the Bernoulli function $b: M \to \RR$, assumed to be non-constant).

\medskip

As they are extremely non-generic, $S$-integrable steady Euler flows are undoubtedly hard to find. In this note we start with a given almost submersion and then we check whether a suitable vector field tangent to the fibres is a steady Euler field, that will be $S$-integrable by construction.

It is known \cite[Prop.3]{sl} that, given a smooth map  $\varphi : (M^3, g) \to (N^2, h)$ between Riemannian manifolds (exponents indicate the dimension), the vector field $V = \lambda_1 \lambda_2 U$ (locally defined around a regular point of $\varphi$) is divergence-free, where $\lambda_i^2$ are the eigenvalues of $\varphi^* h$ w.r.t. $g$ and $U$ is a unit vector spanning $\ker(\dif \varphi)$.
Alternatively (see \cite{slo}) if $\omega$ is the area 2-form induced by $h$ on $N$, then $V=(\ast \varphi^* \omega)^\sharp$ is a divergence-free vector field (notice that this statement is immediate when $H^2(M)=0$ since the closed 2-form $\varphi^* \omega$ is then exact, so that $V$ is the $\cu$ of some vector field on $M$ \footnote{With the terminology in \cite{che}, if $N=\Ss^2$, then $\varphi$ is a \textit{spherical Clebsch map} representing $V$. This method of constructing divergenceless fields was noticed also in \cite{as}.}). Enjoying the divergenceless property by construction, such a vector field  $V$ will be a steady ($S$-integrable) Euler field if moreover the first equation in \eqref{eulb} is satisfied. When $H^1(M)=0$ it is then necessary and sufficient to check the following identity:
\begin{equation}\label{commute}
[V, \cu V]=0.
\end{equation} 
To see this, simply recall the identity: $\cu(A \times B) = (\di B)A - (\di A)B - [A,B]$.

\medskip

According to the main result \cite{slo}, if this programme succeeds, then $\varphi$ is a $\sigma_2$-critical map with potential $P$ (given by the Bernoulli function), that is a solution for the Faddeev-Skyrme model. Let us recall the following
\begin{de}
A smooth (at least $C^2$) map $\varphi: M \to \CC P^1 \cong \Ss^2(\frac{1}{2})$ defined on a Riemannian $3$-manifold $(M,g)$ with volume element $\upsilon_g$ is called a (classical) \emph{Faddeev-Skyrme solution} if it is a critical point of the following energy:
\begin{equation} \label{SFpotFull}
E(\varphi) = \frac{1}{2} \int_{M} \{\alpha_2 \abs{\dif \varphi}^2+ \alpha_4 \abs{\varphi^* \omega}^2 + 2\alpha_0 P(\varphi)\} \upsilon_g, 
\end{equation}
with $\alpha_0$, $\alpha_2$ and $\alpha_4$ non-negative real parameters and $P$ a non-negative real function on $\Ss^2$. If $\alpha_2=0$ and  $\alpha_0=\alpha_4=1$, we also called \cite{slo} such a map a \emph{$\sigma_2$-critical map with potential} $P$.
\end{de}

Although the original model is defined for $M=\RR^3$ and with $\alpha_0=0$ \cite{fad}, the interest was spread to the case of compact manifolds $M$ \cite{ada, fer, har, spei, ward} and to the case with potential ($\alpha_0 \neq 0$) \cite{ada, fos, nit, shn}. In the case of $M=\RR^3$ (where we impose $\lim_{\abs{x} \to \infty}\varphi(x)=(0,0,1)$) and of $M=\Ss^3$, since $\pi_3(\Ss^2) \cong \ZZ$, the homotopy classes of maps $\varphi: M \to \Ss^2$ are indexed by the \emph{Hopf invariant} $Q(\varphi) = \frac{1}{(\int_{\Ss^2} \omega)^2}\int_{\Ss^3} \alpha \wedge \varphi^* \omega$, where $\omega$ is an area form on the codomain and $\alpha$ is any 1-form satisfying $\dif \alpha = \varphi^* \omega$. The energy \eqref{SFpotFull} admits a lower bound in terms of some power of $Q(\varphi)$, as for instance \eqref{lboundfreedman} (see \cite{har} for various lower bounds depending on which of the parameters $\alpha_i$ vanishes). Therefore the main problem is to find stable finite energy solutions in each homotopy class, that will represent \textit{topological solitons} with knotted position curve $\varphi^{-1}(0,0,-1)$. These solutions are also known as \textit{hopfions} \cite{sutclif}. To spot the place of hopfions in the broader context of topological solitons in classical field theory, see \cite{man}.

In view of the "duality" with steady Euler flows when $\alpha_2=0$ (or with forced Euler flows when $\alpha_2 \neq 0$), the question of finding Faddeev-Skyrme solutions on $\Ss^3$ with given Hopf invariant $k$ can be reformulated as:

\medskip
\noindent \textbf{Question}: Does there exist for each $k\in \ZZ$ an $\Ss$-integrable steady Euler solution on $\Ss^3$ whose generic stream lines are (all closed and) linked $k$ times?

\medskip
This has been answered in the afirmative in \cite{slo} if one allows the associated submersion $\varphi$ to be only of class $C^1$ (except for $k=1,2$ where smoothness is achieved) and if the metric on $\Ss^3$ can be chosen freely (depending on $k$). All associated steady Euler flows belongs to a class of solutions discovered in \cite{kam} that will be called \textit{KKPS solutions} in the sequel.

In this note we'll impose that the metric is the standard one and that $\varphi$ is smooth. Under this requirements, the only known solutions are those in \cite[Example 2]{slo} corresponding linking numbers that are perfect squares ($k^2$). In this note we answer the above question for $k=2$ (actually our new solution is even analytic).

\medskip
We mention also a related stronger version of the above question (corresponding to Faddeev-Skyrme solutions with given Hopf invariant, when $\alpha_0=\alpha_2=0$):

\medskip
\noindent \textbf{Question$_B$}: Does there exist for each $k\in \ZZ$ an $\Ss$-integrable (strong) Beltrami field on the round unit 3-sphere whose generic stream lines are (all closed and) linked $k$ times?

\medskip
For this question the only known example with smooth associated map $\varphi$ is the (anti-) Hopf field ($k=\pm 1$). Beside this classic example, $\Ss$-integrable (in a weak sense) non-vanishing Beltrami fields with non-constant proportionality factor and arbitrary linking number can be deduced from the Faddeev-Skyrme solution in \cite{fer, slobo}, but the associated map $\varphi$ has two circles of singular points (where $\dif \varphi$ fails to be continuous). See also \cite[$\S 6.1$]{radu} for energy minimizing Beltrami fields (corresponding to hopfions) with arbitrary $k$, where the metric on $\Ss^3$ was suitably chosen (depending on $k$) and the codomain $N$ is allowed to be a 2-orbifold with two conical singularities (the weighted projective space). 

Notice that in the context of \textit{Question}$_B$ the associated map $\varphi$ must be an almost submersion, i.e. a submersion on a dense set, since the zero set of a Beltrami field is nowhere dense (as Beltrami fields enjoy the unique continuation property) and it must coincide with $C_\varphi$.  

\section{A new solution}

To illustrate the idea in the previous section, let as consider as "test submersion" the Hopf map 
\begin{equation}\label{hop}
\pi  : \Ss^3 \to \Ss^2 , \quad \pi(x_1, y_1, x_2, y_2) = (2(x_1 x_2 + y_1 y_2), 2(x_1 y_2 - x_2 y_1), x_1^2 + y_1^2 - x_2^2 - y_2^2)
\end{equation}
composed with the quadratic map of topological degree 2 (cf. \cite{woo})
\begin{equation}\label{quadra}
\psi : \Ss^3 \to \Ss^3, \quad  \psi(x_1, y_1, x_2, y_2) = (x_1^2 - y_1^2 - x_2^2 - y_2^2, 2x_1 y_1, 2x_1 x_2, 2x_1 y_2).
\end{equation}
It is well-known that $\varphi=\pi \circ \psi$ is then a map of Hopf invariant $Q(\varphi)=\mathrm{deg}(\psi)Q(\pi)=2$. 

We are ready now to formulate our main result:

\begin{pr}
Let $\varphi=\pi \circ \psi : \Ss^3 \to \Ss^2(\tfrac{1}{2})$ be the mapping defined above, and consider the spheres endowed with the usual round metrics (with $\omega$ the associated area 2-form on the codomain). Then

\noindent $(i)$ \ the vector field $V=(\ast \varphi^* \omega)^\sharp$ is an $\Ss$-integrable steady Euler flow on $\mathbb{S}^3$ with the associated Bernoulli function: $b=8x_1^2(x_2^2 + y_2^2)$ and pressure $p=-8x_1^4$. Its explicit expression with respect to the standard orthonormal (global) frame \eqref{stdframe} on $\Ss^3$ is: 
\begin{equation} \label{Sint}
V=4x_1(x_1 \, \xi - y_2 \, X_1 +  x_2 \, X_2); 
\end{equation}

\noindent $(ii)$ \ the almost subersion $\varphi$ is a smooth critical point of Hopf invariant $Q(\varphi)=2$ for the quartic Faddeev-Skyrme energy
\begin{equation} \label{SFpot}
E(\varphi) = \frac{1}{2} \int_{\Ss^3} \{\abs{\varphi^* \omega}^2 + 2 (1- \varphi_3)\} \upsilon_g.
\end{equation}
with the potential $P(\varphi)= 1 - \varphi_3$, where $\varphi_3$ is the $3^{rd}$ component of $\varphi$ seen as map to $\RR^3$.
\end{pr}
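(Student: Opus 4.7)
The plan splits into the two parts of the statement.

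For part (i), I would begin by simplifying $\varphi = \pi \circ \psi$ using the unit-sphere constraint $x_1^2+y_1^2+x_2^2+y_2^2 = 1$: a short calculation shows $\psi(x_1, y_1, x_2, y_2) = (2x_1^2-1,\, 2x_1 y_1,\, 2x_1 x_2,\, 2x_1 y_2)$ on $\Ss^3$, so by \eqref{hop} one obtains $\varphi_3 = 1 - 8x_1^2(x_2^2+y_2^2)$. In particular the proposed Bernoulli function already matches the potential: $b = 8x_1^2(x_2^2+y_2^2) = 1-\varphi_3 = P(\varphi)$. The construction $V = (\ast \varphi^* \omega)^\sharp$ is divergence-free by design (Section 2), so the substantive content of part (i) is (a) matching this $V$ to the expression \eqref{Sint} in the frame $(\xi, X_1, X_2)$, and (b) verifying the steady Euler equations with the asserted $b$ and $p$.

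For (a), I would compute $\varphi^* \omega$ in the left-invariant coframe dual to $(\xi, X_1, X_2)$ using the explicit $\varphi$ above, and then read off $V$ via the Hodge star; the bookkeeping naturally belongs to the appendix. For (b), since $H^1(\Ss^3)=0$, the remark after \eqref{commute} reduces the Euler equation to verifying the commutator identity $[V, \cu V] = 0$; one can then identify $b$ by integrating $V \times \cu V$, which is automatically a gradient. The pressure is then pinned down by $p = b - \tfrac{1}{2}|V|^2$, and the easy evaluation $|V|^2 = 16 x_1^2(x_1^2+x_2^2+y_2^2)$ from \eqref{Sint} indeed yields $p = -8x_1^4$. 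An equivalent direct route, feasible because all expressions are polynomial in the ambient coordinates, is to compute $V \times \cu V$ and $\gr b$ separately in the $(\xi, X_1, X_2)$ frame and check that they coincide.

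For part (ii), smoothness of $\varphi$ is immediate since $\pi$ and $\psi$ are polynomial. The Hopf invariant is $Q(\varphi) = \mathrm{deg}(\psi)\, Q(\pi) = 2\cdot 1 = 2$ by the multiplicativity formula recalled before the proposition. Finally, the fact that $\varphi$ is a $\sigma_2$-critical point for the energy \eqref{SFpot} is a direct application of the correspondence of \cite{slo}: once part (i) provides an $\Ss$-integrable steady Euler field $V \in \Ker(\dif \varphi)$ whose Bernoulli function coincides with $P = 1-\varphi_3$, the map $\varphi$ is automatically critical for the quartic Faddeev-Skyrme energy with that potential.

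The main obstacle is therefore the Euler-equation verification in part (i)(b): the Lie brackets and $\cu$ in the $(\xi, X_1, X_2)$ frame involve $SU(2)$ structure constants, and the coefficients $4x_1^2$, $-4x_1 y_2$, $4x_1 x_2$ of $V$ lead to lengthy expansions. The brevity of the final formulas for $V$, $b$, and $p$ suggests substantial cancellations, presumably handled in the appendix.
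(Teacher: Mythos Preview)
Your proposal is correct and follows essentially the same route as the paper: compute $V$ explicitly, verify the commutator identity \eqref{commute}, recover $b$ and $p$ by integration, check $b=1-\varphi_3$, and then invoke \cite[Proposition~2]{slo} for part~(ii). The only notable shortcut you do not mention is that the paper computes $V$ via the identity $\pi^*\omega=\tfrac{1}{2}\dif\eta$, so that $V=\tfrac{1}{2}\cu\big((\psi^*\eta)^\sharp\big)$, which streamlines step~(a).
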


\begin{proof} $(i)$ First we have to construct the vector field $V=(\ast \varphi^* \omega)^\sharp$ then to verify by direct computation that \eqref{commute} holds true. Henceforth the pressure and Bernoulli function are found by integration (we have used Mathematica \cite{math}, but all computations may be also done by hand. The Mathematica worksheet with fully detailed proofs is available from the author). In the computation of $V$ it is useful to remark that $\pi^* \omega = \frac{1}{2}\dif \eta$, where $\eta$ is the contact form dual to the Reeb field $\xi$ on $\Ss^3$ given explicitly in \eqref{stdframe}. Therefore $V=\frac{1}{2}(\ast \dif \psi^* \eta)^\sharp =\frac{1}{2}\cu \widehat{\xi}$, with $\widehat{\xi}:= (\psi^* \eta)^\sharp$, and for the computation of $\cu \widehat{\xi}$ one can use \cite[(2.6)]{radu}. We can check that $V_x \in \ker (\dif \varphi_x)$ at any regular point $x$, so that $V$ is $\Ss$-integrable (see also item $(v)$ below).

\noindent $(ii)$ We verify by direct computation that $b = 1 - \varphi_3$, then apply \cite[Proposition 2]{slo}.
\end{proof}

\medskip

In the rest of this section we list some properties of the steady Euler flow $V$ / of the Faddeev-Skyrme solution $\varphi$.
 
\begin{enumerate}[(i)]
\item As the Hopf map itself, the mapping $\varphi$ defined above is a "happy accident": they are the only Faddeev-Skyrme solutions (critical points of the energy in \eqref{SFpotFull} with $\alpha_2=0$ and $\alpha_0=\alpha_4=1$, for \textit{any} choice of potential) in a series of polynomial mappings that can be constructed in each homotopy class using the same recipe $\varphi_k = \pi \circ \psi_k$, where the degree $k$ map $\psi_k:\Ss^3 \to \Ss^3$ is defined recurently as (see \cite{woo}):
\begin{equation}\label{mod4}
\begin{split}
\psi_{4k}   & = -\psi_2 + 2 \langle \psi_2, \psi_{2k+1} \rangle \psi_{2k+1}\\
\psi_{4k+1} & = -\psi_1 + 2 \langle \psi_1, \psi_{2k+1} \rangle \psi_{2k+1}\\
\psi_{4k+2} & = -\psi_2 + 2 \langle \psi_2, \psi_{2k+2} \rangle \psi_{2k+2}\\
\psi_{4k+3} & = -\psi_1 + 2 \langle \psi_1, \psi_{2k+2} \rangle \psi_{2k+2}
\end{split}
\end{equation}
with $\psi_1 := \mathrm{Id}_{\Ss^3}$, the identity map, and $\psi_2:=\psi$ given by Equation \eqref{quadra}. If we see $\Ss^3$ as the space of unit quaternions,  $\psi_k$ takes the compact form $\psi_k(q)=q^k$ (but the expanded form \eqref{mod4} is necessary in order to be able to apply \cite[Lemma 5]{woo} and to obtain $\mathrm{deg}(\psi_k) = k$). In spherical coordinates (see Appendix) the mappings $\psi_k$ all have the same nice simple form reminiscent of the \textit{hedgehog map} \eqref{hedge}
\begin{equation}\label{sphersimple}
\psi_k: \ (\cos s, \sin s(\cos t, \sin t \, e^{\ii \chi})) \mapsto (\cos ks, \sin ks(\cos t, \sin t \, e^{\ii \chi})).
\end{equation}

The above statement of unicity can be simply proved by computing the vector field $V_k=(\ast \varphi_k^* \omega)^\sharp$ and then by direct verification that \eqref{commute} is satisfied only for $k=1,2$. Actually one can prove a stronger fact: $\varphi_{1,2}$ are the only  solutions (for $\alpha_2=0$ and any choice of potential) within the \textit{ansatz} $\pi \circ \psi_\beta$ with $\psi_\beta (\cos s, \sin s(\cos t, \sin t \, e^{\ii \chi}))=(\cos \beta(s), \sin \beta(s)(\cos t, \sin t \, e^{\ii \chi}))$.

The sequence $\varphi_k$ is of course only one of the possible ways of browsing the homotopy classes with polynomial maps. It would be interesting to have a systematic approach able to pinpoint \textit{all} polynomial Faddeev-Skyrme solutions (for some potential), and their corresponding $\Ss$-integrable steady Euler flows. \\

\item The smooth Faddeev-Skyrme solution $\varphi$ (of the variational problem associated to \eqref{SFpot}) is a mapping which does not belong to the \textit{ansatzes} considered usually in the literature as initial configurations for the (numerical) energy minimization schemes. For instance $\varphi$ does not belong to the \textit{standard ansatz} (known to geometers as \textit{$\alpha$-Hopf construction} \cite{eer}), since in Hopf coordinates (see Appendix) it can be described as following ($\alpha$, $\beta$ can be explicitly given in terms of trigonometric functions):
\begin{equation}
(\cos s \, e^{\ii \phi_1}, \sin s \, e^{\ii \phi_2}) \mapsto \left(\cos \alpha(s, \phi_1), \ \sin \alpha(s, \phi_1)e^{\ii (\beta(s, \phi_1)+\phi_2)}\right).
\end{equation}

Neither does it belong to the \textit{rational map ansatz} \cite{sut} since composing the stereographic projection (from South pole) with $\varphi$ yields the following complex function:
$$
(z_1, z_2) \mapsto \frac{\varphi_1 + \ii \varphi_2}{1+\varphi_3}=\frac{4 \mathrm{Re}(z_1) z_2 (\ov{z}_1^2 - \abs{z_2}^2)}{1 + \abs{z_1}^4 + \abs{z_2}^4 - 2 \abs{z_2}^2 (\ov{z}_1^2 + 2 \mathrm{Re}(z_1) \, z_1)}
$$
For comparison, the complex function associated to Hopf map reads $(z_1, z_2) \mapsto \frac{\pi_1 + \ii \pi_2}{1+\pi_3}=\frac{z_2}{z_1}$ and represents the simplest member of the rational map ansatz. \\

\item The topological lower bound in \cite{har} for the Faddeev-Skyrme energy \eqref{SFpotFull} with $\alpha_2=0$ and  $\alpha_0=\alpha_4=1$ (quartic energy + potential) of mappings $\varphi:\RR^3 \to \Ss^2$ holds true also for mappings $\Ss^3\to \Ss^2(\frac{1}{2})$ and reads:
\begin{equation}\label{lboundfreedman}
E(\varphi) \geq \frac{4}{(27\pi)^{1/4}}\left(\int_{\Ss^2(\frac{1}{2})} \Omega\right)^{3/2} \abs{Q(\varphi)}^{\frac{3}{4}},
\end{equation}
where $\Omega=(2P(\varphi))^{1/6} \omega$, with $\omega$ the standard area form on $\Ss^2(\frac{1}{2})$. To see this we simply have to remark that the first part of the proof in \cite[$\S 4$]{har} does not depend on the domain of definition of $\varphi$, then, at the point where one uses the $L^{3/2}(\RR^3)$-energy estimate from \cite{free}, one should remark that this energy is conformally invariant in dimension 3 so it also holds on $\Ss^3$.

In our case $E(\varphi) = 46.058$, much higher than the right hand term which is approximately 13.852. At the same time, it is not clear whether \eqref{lboundfreedman} can be attained by a smooth map.

Notice that \eqref{lboundfreedman} implies a lower bound of the quantity $\int \{\abs{V}^2 + 2 b \} \upsilon_g$ defined for a $S$-integrable steady Euler field (and its Bernoulli function), in terms of helicity. \\

\item Transporting the Faddeev-Skyrme solution $\varphi$ (or the steady Euler field $V$) to $\RR^3$ fails if we consider the inverse stereographic projection $\sigma^{-1}$ or the "hedgehog map" $H: \RR^3 \to \Ss^3$ defined as
\begin{equation}\label{hedge}
r(\cos \theta, \, \sin \theta e^{\ii \phi}) \mapsto \left(\cos f(r), \, \sin f(r)(\cos \theta, \, \sin \theta  e^{\ii \phi})\right), \ f(0)=\pi, \  f(\infty)=0.
\end{equation}
By this, we mean that the composed map $\varphi \circ H$ (or $\varphi \circ \sigma^{-1}$) is no longer a Faddeev-Skyrme solution and the corresponding $V$ on $\RR^3$ is not a steady Euler field (but it is, by construction, a completely integrable divergence-free non-axisymetric vector field!). It would be a major achievement to find a suitable map $\RR^3 \to \Ss^3$ that pulls a Faddeev-Skyrme solution on $\Ss^3$  back to a solution defined on the Euclidean space. At this point mappings like $\varphi \circ H$ or $\varphi \circ \sigma^{-1}$ may be useful only as initial configurations for numerical energy minimization. \\

\item The critical set $C_\varphi$ of $\varphi$ is equal to the equatorial 2-sphere obtained as the intersection of $\Ss^3$ with the hyperplane $x_1=0$. In Hopf coordinates we can describe it as $C_\varphi = \{(\cos s \, e^{\ii \frac{(2m+1) \pi}{2}}, \, \sin s\, e^{\ii\phi_2})  : s \in [0, \pi/2], \phi_2 \in [0, 2\pi), m=0,1\}$. In particular $\varphi$ is a submersion on a dense set, i.e. $\varphi : \Ss^3 \setminus C_\varphi \to \Ss^2(\frac{1}{2})$ is a submersion. As expected, since $\abs{\varphi^* \omega}=\abs{V}$,  the critical set $C_\varphi$ coincides to the zeros of the steady Euler field $V$ in \eqref{Sint}.

The only critical value of $\varphi$ is the "North pole" $(0,0,1) \in \Ss^2$. Its preimage contains also regular points on a circle that intersect the critical set in two points (so the two semicircles represent heteroclinic orbits for $V$): $\varphi^{-1}(0,0,1)=C_\varphi \cup \{(e^{i\phi_1},0)  : \phi_1 \in [0, 2\pi)\}$. In particular, at any point $C_\varphi$ the Faddeev-Skyrme energy density $\abs{\varphi^* \omega}^2 + 2(1- \varphi_3)$ is zero.

The entire "Hopf link" $\pi^{-1}(0,0,\pm 1)=\{(e^{i\phi_1},0)  : \phi_1 \in [0, 2\pi)\}\cup \{(0, \, e^{i\phi_2})  : \phi_2 \in [0, 2\pi)\}$ is sent to the North pole. Notice that $V$ vanishes along the second circle of the Hopf link. The analogue of the Hopf link in our case (ignoring $C_\varphi$) is $\Gamma_1 \cup \Gamma_2$ with $\Gamma_1=\varphi^{-1}(0,0, 1) \setminus C_\varphi = \{(e^{i\phi_1},0)  : \phi_1 \in [0, 2\pi)\}$ and $\Gamma_2=\varphi^{-1}(0,0,- 1)  = \{(\frac{1}{\sqrt{2}}e^{\ii m \pi}, \, \frac{1}{\sqrt{2}}e^{\ii\phi_2})  : \phi_2 \in [0, 2\pi), m=0,1\}$ and we can "see" that $\Gamma_2$ intersects twice the half 2-sphere bounded by $\Gamma_1$ so that their linking number is 2 (recall that in our context the Hopf invariant of $\varphi$ equals the linking number of any two \textit{regular} fibres, which are also generic orbits of the Euler field $V$).\\

\item The critical set of the Bernoulli function $b$ is given by $C_b = C_\varphi \cup \{(e^{i\phi_1},0)  : \phi_1 \in [0, 2\pi)\}\cup \{(\frac{1}{\sqrt{2}}e^{\ii m \pi}, \, \frac{1}{\sqrt{2}}e^{\ii\phi_2})  : \phi_2 \in [0, 2\pi), m=0,1\}$   and its critical values are 0 and 2. Accordingly, the regular Bernoulli level surfaces ($b^{-1}(\varsigma)$, with $\varsigma \neq 0,2$) to which the dynamic of $V$ is confined are (diffeomorphic) 2-tori \cite[Theorem 1.10]{arn}. On each torus the motion along integral curves of $V$ is \textit{periodic}. 

While in the case of KKPS solutions \cite{kam} the regular level sets of the Bernoulli function form a parallel family of 2-tori of constant mean curvature in $\Ss^3$ (since $b$ is isoparametric, as $V=F(\cos^2 s) \partial_{\phi_1}+G(\cos^2 s) \partial_{\phi_2}$ and $b=b(\cos^2 s)$), for our solution \eqref{Sint} this does not hold any more. 

Besides the Bernoulli function $b=1-\varphi_3$, another (independent) first integral for $V$ is $\varphi_1$ (or $\varphi_2$) since $V \in \ker(\dif \varphi)$ so $V(\varphi_1)=\dif (\imath \circ \varphi)(V)(w_1)=\dif \varphi(V)(w_1\circ\imath)=0$, where $\imath$ is the inclusion map and $w_i$ the coordinates functions on $\RR^3$.

\noindent Notice also that at the zeros of $V$ (\textit{equilibrium points}) both $b$ and $p$ vanish. \\

\item There is no isometry relating our steady Euler flow to a KKPS type flow, since $V(\abs{V})\neq 0$ almost everywhere for our Euler field, while for any KKPS field we have $V(\abs{V}) = 0$ and this is a property conserved by isometries. \\

\item Recall \cite{arn0, arn} that "helicity bounds energy" for any exact divergence-free vector field:
\begin{equation}\label{hbe}
\int_{\Ss^3} \abs{V}^2 \upsilon_g \geq 2 \mathcal{H}(V), \qquad \mathcal{H}(V):=\big(\cu^{-1}V, \, V\big)_{L^2}.
\end{equation}
In our setup, the helicity of $V$ is related to the Hopf invariant of $\varphi$ through the simple relation $\mathcal{H}(V)=\pi^2 Q(\varphi)$. For the steady Euler field \eqref{Sint} the $L^2$ energy equals $\frac{20}{3}\pi^2$, so is much higher than its lower bound $2\mathcal{H}(V)=4\pi^2$ which is invariant in the orbit of $V$ through the action of volume preserving diffeomorphisms.
\end{enumerate}

\section{Appendix}

\subsection{Cartesian coordinates.}
The sphere $\Ss^3$ is seen as the set of points $(z_1, z_2) \in \CC^2$ with $\abs{z_1}^2 + \abs{z_2}^2 =1$. Denoting $z_j= x_j + \ii y_j$, at each point $(x_1, y_1, x_2, y_2) \in \Ss^3$ we have the orthonormal frame (of Killing vector fields that are moreover eigenvectors of $\cu$ for the first positive eigenvalue $\mu_1=2$):
\begin{equation} \label{stdframe}
\begin{split}
\xi     & = - y_1 \partial_{x_1} + x_1 \partial_{y_1} - y_2 \partial_{x_2} + x_2 \partial_{y_2} ,\\
X_1 & =-x_2 \partial_{x_1} + y_2 \partial_{y_1} + x_1 \partial_{x_2} - y_1 \partial_{y_2} ,\\
X_2 & =-y_2 \partial_{x_1} - x_2 \partial_{y_1} + y_1 \partial_{x_2} + x_1 \partial_{y_2}.
\end{split}
\end{equation}

\subsection{Hopf coordinates.}
$(x_1, y_1, x_2, y_2)=(\cos s \, e^{i\phi_1}, \sin s \, e^{i\phi_2})$, $s \in [0, \pi/2]$, $\phi_i \in [0, 2\pi)$.
The induced metric reads $g=\dif s^2 + \cos ^2 s \, \dif \phi_1^2 + \sin ^2 s \, \dif \phi_2^2$ and the standard orthonormal frame above becomes
\begin{equation} \label{hopf}
\begin{split}
\xi &= \partial_{\phi_1} + \partial_{\phi_2}, \\
X_1 &= \cos(\phi_1 + \phi_2)\partial_s +\sin(\phi_1 + \phi_2)(\tan s \, \partial_{\phi_1} - \cot s \, \partial_{\phi_2}), \\
X_2 &= \sin(\phi_1 + \phi_2)\partial_s -\cos(\phi_1 + \phi_2)(\tan s \, \partial_{\phi_1} - \cot s \, \partial_{\phi_2}) ,
\end{split}
\end{equation}

\noindent In these coordinates the Hopf map \eqref{hop} reads 
$$\pi(\cos s \, e^{i\phi_1}, \sin s \, e^{i\phi_2})=(\sin 2s \, e^{\ii(-\phi_1 +\phi_2)}, \ \cos 2s)$$ and the contact form $\eta = \xi^\flat = \cos^2 s \dif \phi_1 + \sin^2 s \dif \phi_2$. On $\Ss^2(\frac{1}{2})$ the standard area form is $\omega = \frac{1}{4}\sin u \dif u \wedge \dif v$ in the chart $(\sin u e^{\ii v}, \, \cos u)$  around North pole. Then we can directly check $\pi^* \omega = \frac{1}{2}\dif \eta$ so the Hopf invariant is  $Q(\pi) = \frac{1}{\mathrm{area}(\Ss^2(\frac{1}{2}))^2}\int_{\Ss^3} \frac{1}{4}\eta \wedge \dif \eta = \frac{1}{\pi^2}\frac{\mathrm{vol}(\Ss^3)}{2}=  1$, since the orientation on $\Ss^3$ is given by the (metric) volume element $\upsilon_g= \frac{1}{2}\eta \wedge \dif \eta$ (so that $\ast \dif \eta = 2 \eta$).

\noindent The solution we found reads: 
\begin{equation} \label{SintHopf}
V=\sin 2 s \sin 2 \phi_1 \, \partial_s + 4 \cos 2s \cos^2 \phi_1 \, \partial_{\phi_1} +  8 \cos^2 s \cos^2 \phi_1 \, \partial_{\phi_2}
\end{equation}
with the associated Bernoulli function: $b=2\sin^2 2s \cos^2 \phi_1$ and pressure $p=-8\cos^4 s \cos^4  \phi_1$.

\subsection{Spherical coordinates.}
$(x_1, y_1, x_2, y_2)= (\cos s, \sin s(\cos t, \sin t \, e^{\ii \chi}))$,  $s,t \in [0, \pi]$, $\chi \in [0, 2\pi)$.
The above standard orthonormal frame becomes
\begin{equation} \label{susp}
\begin{split}
\xi &= \cos t \partial_{s} - \cot s \sin t \partial_{t} +\partial_{\chi}, \\
X_1 &= \sin t \cos  \chi \partial_{s} +(\cot s \cos t  \cos  \chi - \sin \chi)\partial_{t} -(\cot t \cos \chi+ \cot s \csc t \sin \chi)\partial_{\chi}, \\
X_2 &= \sin t \sin \chi \partial_{s} +(\cot s \cos t  \sin  \chi + \cos \chi)\partial_{t} -(\cot t \sin \chi - \cot s \csc t \cos \chi)\partial_{\chi} .
\end{split}
\end{equation}

\bigskip

\end{document}